\newtheorem{theorem}{Theorem}
\newtheorem{corollary}{Corollary}
\theoremstyle{definition}
\newtheorem{definition}{Definition}
\newtheorem{remark}{Remark}
\newtheorem{proposition}{Proposition}
\def\mul{{\rm mul\,}}
\newcommand\dom{\operatorname{dom}}
\def\Ext{{\rm Ext\,}}
\def\op{{\rm op\,}}
\newcommand{\ran}{{\mathrm{ran\,}}}
\newcommand{\gG}{{\Gamma}}
\newcommand{\cH}{{\mathcal H}}
\newcommand{\cC}{{\mathcal C}}
\newcommand{\gotH}{{\mathfrak H}}
\begin{document}

\title
    {On the extremal  extensions of a non-negative    Jacobi operator}

\author{Aleksandra Ananieva}
\address{R. Luxemburg str. 74, \, 83114, Donetsk, Ukraine}
\curraddr{}
\email{ananeva89@gmail.com}
\thanks{The authors  express their gratitude to  Prof.  M. Malamud for posing the problem  and permanent  attention to the work and to  Prof. L. Oridoroga  for useful discussions.
The second author  was supported by  FAPESP grant 2012/50503-6.}

\author{Nataly Goloshchapova }
\address{ Rua do Mat\~ao, 1010, \, 05508-090, S\~ao Paulo,  Brazil}
\email{nataliia@ime.usp.br}


\subjclass[2000]{47B36; 47B36 }
\date{}
\dedicatory{Dedicated with deep respect to Professor Ya.V. Mykytyuk on
the occasion of his 60th birthday.} \keywords{Jacobi matrix, non-negative operator,
Friedrichs and Krein  extensions, boundary triplet, Weyl function}

\begin{abstract}
We consider minimal non-negative Jacobi operator with $p\times p-$matrix
entries. Using the technique  of  boundary  triplets and the
corresponding Weyl functions, we describe
the  Friedrichs and Krein extensions of the minimal Jacobi operator.
Moreover, we  parameterize the set of  all non-negative self-adjoint extensions  in
terms  of  boundary conditions.
  \end{abstract}

\maketitle

\section{Introduction}

Let  $A$ be a densely defined non-negative symmetric operator in  the Hilbert space
$\mathfrak{H}$. Since $A$ is non-negative, then by Friedrichs-Krein theorem, it admits
non-negative self-adjoint extensions. Qualified description of all non-negative
self-adjoint extensions of $A$ and also criterion of uniqueness of non-negative
self-adjoint extension of $A$  were first given by Krein in \cite{Kre47}. His results
were generalized in numerous papers (see \cite{ArlTse05, DerMal91, DerMal95} and
references therein).

 Among  all non-negative self-adjoint extensions of $A$, two (extremal) extensions   are particularly interesting and
 important
enough to have a name. The Friedrichs extension (so-called "hard" extension) $A_F$ is
the "greatest" one in the sense of quadratic forms. It is given by  restriction of $A^*$
to the domain
\[
\dom(A_F)=\left\{ \begin{array}{c}
                     u\in \dom(A^*):\exists\,\, u_k\in \dom(A) \;\text{such  that}\;
\|u-u_k\|_\mathfrak{H}\rightarrow 0 \; \\ \text{as}\; k\rightarrow \infty\;
 \mbox{and }\; (A(u_j-u_k),u_j-u_k)_\mathfrak{H} \rightarrow 0 \;\text{as}\;
j, k\rightarrow \infty
                  \end{array} \right\}.
\]
In other words, $A_F$  is the  self-adjoint operator associated with the closure of the
symmetric form
$$
\mathfrak{t}[u,v]=(Au,v)_\mathfrak{H}, \qquad u,v\in \dom(A).
$$
The  Krein extension ("soft" extension) $A_K$ is defined to be restriction of $A^*$ to
the domain
\begin{equation}\label{domA_K}\dom(A_K)=\left\{ \begin{array}{c}
                     u\in \dom(A^*):\exists\,\, u_k\in \dom(A) \;\text{such  that}\;
\|A^*u-Au_k\|_\mathfrak{H}\rightarrow 0 \; \\ \text{as}\; k\rightarrow \infty\;
 \mbox{and }\; (u_j-u_k,A(u_j-u_k))_\mathfrak{H} \rightarrow 0 \;\text{as}\;
j, k\rightarrow \infty
                  \end{array} \right\}.\end{equation}
If $A$ is positive definite, $A\geq \varepsilon I>0,$ then ~\eqref{domA_K} takes the form
\begin{equation}\label{A_K}
\dom(A_K)=\dom(\overline{A}) \dot{+}\ker(A^*).
\end{equation}
Krein proved in \cite{Kre47} that all the non-negative self-adjoint extensions
$\widetilde{A}$ of $A$ lie between $A_F$ and $A_K$, i.e.,
\[((A_F+aI)^{-1}u,u)_\mathfrak{H}\leq((\widetilde{A}+aI)^{-1}u,u)_\mathfrak{H}\leq
((A_K+aI)^{-1}u,u)_\mathfrak{H},\quad u\in\mathfrak{H},\] for  any $a>0$.

  In the present  paper we are dealing with the problem of description of the extremal extensions of non-negative  Jacobi
  operator to be defined below.
 Let $A_j, B_j\in \mathbb{C}^{p\times p}$. Moreover, we assume that matrices  $A_j$ are
self-adjoint  and  matrices  $B_j$  are   invertible for each $j\geq
0$ (see \cite[Chapter VII, \S 2]{Ber68}). We consider  semi-infinite Jacobi  matrix  with  matrix entries
\begin{equation*}
\mathbf{J}= \left(
  \begin{array}{cccc}
    A_0 & B_0 & O_p & \ldots \\
    B^*_0 & A_1 & B_1 & \ldots \\
    O_p &  B_1^*& A_2 & \ldots \\
    \vdots & \vdots & \vdots & \ddots \\
  \end{array}
\right),
\end{equation*}
where $O_p$ is zero $p\times p$ matrix. Given a sequence $u=(u_j),\,\,
u_j\in\mathbb{C}^p$, $\mathbf{J}u$  is  again  a sequence of column  vectors. If  we set
$B_{-1}=O_p$,
\[(\mathbf{J}u)_j=B_{j}u_{j+1}+A_ju_j+B_{j-1}^*u_{j-1},\quad j\geq 0.\]
The  maximal  operator $T_{\max}$ is defined  by
\begin{equation*}\label{max}
(T_{\max}u)_j=(\mathbf{J}u)_j,\quad j\geq 0
\end{equation*}
 on  the  domain
\[\dom(T_{\max})=\{u\in l^2_p:\,\, \mathbf{J}u\in l^2_p\}.\]

The minimal operator $T_{\min}$  is the  closure in $l^2_p$ of the  preminimal
operator $T$ which is the  restriction of $T_{\max}$ to the domain
\begin{equation*}\label{min}
\dom(T)=\{u\in l^2_p:\,\,u_j=0 \text{\,\,for  all but a finite  number of values of }
j\}.
\end{equation*}
 It is straightforward to see that $T_{\min}$ is a densely defined symmetric
operator and
\[ T_{\min}^*=T_{\max},\quad T_{\max}^*=\overline{T}=T_{\min}. \]
Deficiency indices $n_\pm(T_{\min})=\dim(\ker(T_{\max}\pm zI)),\,\,z\in\mathbb{C}_+,$
satisfy the inequality $0\leq n_+(T_{\min}), n_-(T_{\min}) \leq p$ (see \cite[Chapter
VII, \S 2]{Ber68}). In  the following,  we shall assume that  $n_\pm(T_{\min})=p$
(completely indefinite case takes place)  and  $T$ is non-negative. Note that
non-negativity of $T$ implies non-negativity of $A_j,\; j\geq 0$.


Examples of symmetric block Jacobi matrices generating symmetric operators with
arbitrary possible values of the deficiency numbers were constructed in \cite{Dyu2010}.

The  problem of description of  the extremal  extensions   $T_F$  and $T_K$  in the
scalar  case $(p=1)$ was studied  in the number of papers. Description of the Friedrichs
domain in terms of a weighted Dirichlet sum was  obtained in \cite{Ben94}.

 In \cite{Sim98}, Simon showed that certain matrix operators that approximate the
Friedrichs and Krein extensions converge in the strong resolvent norm. In \cite{BrChr05}
Brown and Christiansen (assuming that $T$ is \emph{positive  definite}) obtained  the description of  $T_F$  and $T_K$  using  the concept of the so-called minimal solution (see also \cite{MarZet00}).

 The purpose of this work is to generalize  at least partially the  results obtained  in \cite{BrChr05}   to the case of arbitrary $p\in\mathbb{N}$ and \emph{non-negative}  operator $T$.  We use  an abstract
            description of extremal non-negative extensions  obtained by V.~Derkach and M.~Malamud in the  framework   of
            boundary  triplets  and the  corresponding Weyl  functions  approach (see  \cite{DerMal91, GG} and also  Section 2 for the precise  definitions). In
            particular, we show that mentioned  results  from  \cite{BrChr05} might be expressed in terms of boundary
            triplets theory.\\
                        \\

            \textbf{Notation.}  In what follows $\mathbb{C}^{p\times p}$  denotes the  set of
$p\times p$ complex-valued matrices;   $l^2_p$ denotes Hilbert space of infinite
sequences $u=(u_j),\,\, u_j\in\mathbb{C}^p$ equipped with   inner product
$(u,v)_{l^2_p}=\sum\limits_{j=0}^\infty v^*_ju_j$. The set of closed (bounded) operators
in the Hilbert space $\cH$ is denoted by $\cC(\cH)\,$ (respectively,
$\mathcal{B}(\cH)$).

 \section{Linear relations, boundary  triplets  and Weyl  functions}
                Let $A$ be a closed densely defined symmetric operator in
the separable  Hilbert space $\mathfrak H $ with equal deficiency indices
$n_\pm(A)=\dim\ker(A^*\pm iI)\leq\infty$.

\begin{definition} \cite{GG} %
A triplet $\Pi=\{\cH,\gG_0,\gG_1\}$ is called a \textit{boundary triplet} for the
adjoint operator $A^*$ of $A$ if $\cH$ is an auxiliary Hilbert space and
$\Gamma_0,\Gamma_1:\  \dom(A^*)\rightarrow\cH$ are linear mappings such that
\begin{itemize}
\item [\textit{(i)}] the  second  Green identity,
\begin{equation*}\label{GI}
(A^*f,g)_\mathfrak H  - (f,A^*g)_\mathfrak H  = (\gG_1f,\gG_0g)_\cH -
(\gG_0f,\gG_1g)_\cH,
\end{equation*}
holds for all $f,g\in\dom(A^*)$, and
\item [\textit{(ii)}] the mapping $\gG:=(\Gamma_0,\Gamma_1)^\top: \dom(A^*) \rightarrow \cH \oplus\cH$
is surjective.
\end{itemize}
\end{definition}
With each boundary triplet $\Pi=\{\cH,\gG_0,\gG_1\}$  one associates two
self-adjoint extensions $A_j:=A^*\upharpoonright \ker (\Gamma_j), \; j\in\{0,1\}.$
\begin{definition}
\
\begin{itemize}
\item[\textit{(i)}]  A  \textit{closed  linear relation} $\Theta$ in  $\mathcal{H}$ is a closed subspace of
$\mathcal{H}\oplus\mathcal{H}$.   The \textit{domain}, the \textit{range}, and the
\textit{multivalued part} of $\Theta$ are defined as follows
\begin{gather*}\dom(\Theta) :=\bigl\{
f:\{f,f'\}\in\Theta\bigr\},\quad \ran(\Theta): =\bigl\{
f^\prime:\{f,f'\}\in\Theta\bigr\},\\ \mul(\Theta): =\bigl\{
f^\prime:\{0,f'\}\in\Theta\bigr\}.\end{gather*}
\item[\textit{(ii)}] A linear  relation  $\Theta$  is \textit{symmetric}
 if  $$(f',h)_{\cH}-(f,h')_{\cH}=0\quad  \mbox{for all}\quad  \{f,f'\},  \{h,h'\}\in  \Theta.$$
 \item[\textit{(iii)}] The \textit{adjoint} relation
$\Theta^*$  is defined by
\begin{equation*}
\Theta^*= \big\{ \{h,h^\prime\}: (f^\prime,h)_{\cH}=(f,h^\prime)_{\cH}\ \ \text{for
all}\ \ \{f,f^\prime\} \in\Theta\big\}.
\end{equation*}
\item[\textit{(iv)}]  A closed linear   relation $\Theta$  is  called  \textit{self-adjoint}
 if both $\Theta$ and $\Theta^*$ are  maximal symmetric, i.e., they do not admit symmetric extensions.
 \end{itemize}
    \end{definition}
     For the
symmetric relation $\Theta\subseteq\Theta^*$ in $\cH$ the multivalued part
$\mul(\Theta)$ is  orthogonal to $\dom(\Theta)$ in $\cH$. Setting $\cH_{\rm
op}:=\overline{\dom(\Theta)}$ and $\cH_\infty:=\mul(\Theta)$, one verifies that $\Theta$
can be rewritten as the direct orthogonal sum of a self-adjoint operator $\Theta_{\rm
op}$ (\emph{operator part} of $\Theta$) in the subspace $\cH_{\rm op}$ and a ``pure''
relation $\Theta_\infty=\bigl\{\{0,f'\}:f'\in\mul(\Theta)\bigr\}$ in the subspace
$\cH_\infty$.

\begin{proposition}\cite{DerMal91,GG}\label{propo}
Let  $\Pi=\{\mathcal{H},\Gamma_0,\Gamma_1\}$  be   a boundary triplet for $A^*$. Then
the  mapping
\begin{equation}\label{ext}
\Ext_A\ni\widetilde{A}:=A_\Theta\rightarrow
\Theta:=\Gamma(\dom(\widetilde{A}))=\big\{\{\Gamma_0f,\Gamma_1f\}:\,\,f\in
\dom(\widetilde{A})\big\}
\end{equation}
establishes  a  bijective  correspondence   between  the  set  of all  closed proper
extensions $\Ext_A$  of $A$ and  the set of all closed   linear  relations
$\widetilde{\mathcal{C}}(\mathcal{H})$ in $\mathcal{H}$. Furthermore,  the following
assertions  hold.
\begin{itemize}
\item[\textit{(i)}]
  The  equality $(A_\Theta)^*=A_{\Theta^*}$
%
%
holds  for any  $\Theta\in\widetilde{\mathcal{C}}(\mathcal{H})$.

\item[\textit{(ii)}]
 The extension $A_\Theta$  in \eqref{ext} is  symmetric
(self-adjoint)  if  and only  if  $\Theta$  is symmetric (self-adjoint).
\item[\textit{(iii)}]
  If,  in  addition,    extensions $A_\Theta$ and
$A_0$ are disjoint, i.e.,
$\dom(A_\Theta)\cap\dom(A_0)=\dom(A)$, then \eqref{ext} takes the form
\begin{equation*}
A_\Theta=A_B=A^*\!\upharpoonright\ker\bigl(\Gamma_1-B\Gamma_0\bigr),\quad
B\in\mathcal{C}(\mathcal{H}).
   \end{equation*}
\end{itemize}
 \end{proposition}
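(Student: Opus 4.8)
The plan is to construct an explicit inverse to the map \eqref{ext} and to read off every stated property from the second Green identity together with the surjectivity of $\gG=(\gG_0,\gG_1)^\top$. The cornerstone is the kernel identity
\[
\ker\gG_0\cap\ker\gG_1=\dom(A).
\]
To prove it, observe that if $f\in\ker\gG_0\cap\ker\gG_1$ then the right-hand side of the Green identity vanishes for every $g\in\dom(A^*)$, so $(A^*f,g)_{\mathfrak H}=(f,A^*g)_{\mathfrak H}$; hence $f\in\dom(A^{**})=\dom(A)$ since $A$ is closed. Conversely, for $f\in\dom(A)$ the left-hand side vanishes, so $(\gG_1f,\gG_0g)_{\cH}=(\gG_0f,\gG_1g)_{\cH}$ for all $g$; surjectivity of $\gG$ lets $(\gG_0g,\gG_1g)$ run through all of $\cH\oplus\cH$, forcing $\gG_0f=\gG_1f=0$.

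Granting this, I would define the candidate inverse by $\Theta\mapsto A_\Theta:=A^*\!\upharpoonright\gG^{-1}(\Theta)$ for $\Theta\in\widetilde{\cC}(\cH)$, where $\gG^{-1}(\Theta)=\{f\in\dom(A^*):\{\gG_0f,\gG_1f\}\in\Theta\}$. Since $0\in\Theta$, we get $\dom(A)=\ker\gG\subseteq\dom(A_\Theta)$, so $A\subseteq A_\Theta\subseteq A^*$. That the two maps are mutually inverse follows from the kernel identity: for any proper extension $\widetilde A$ one has $\dom(\widetilde A)=\gG^{-1}(\gG(\dom\widetilde A))$ because $\ker\gG=\dom(A)\subseteq\dom(\widetilde A)$, while $\gG(\gG^{-1}(\Theta))=\Theta$ by surjectivity of $\gG$. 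Closedness of $A_\Theta$ is the one genuinely technical point: it rests on the fact that $\gG\colon\dom(A^*)\to\cH\oplus\cH$ is bounded when $\dom(A^*)$ carries the graph norm, which I would establish by the closed-graph theorem (or, equivalently, from the decomposition $\dom(A^*)=\dom(\overline A)\dot{+}\ker(A^*-iI)\dot{+}\ker(A^*+iI)$, on which $\gG$ acts by bounded blocks). Once boundedness is in hand, $\gG^{-1}(\Theta)$ is closed in the graph norm whenever $\Theta$ is closed, so $A_\Theta$ is closed; this is the step I expect to require the most care.

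For assertion (i), I would compute $\dom((A_\Theta)^*)$ directly. Because $A\subseteq A_\Theta$, the adjoint satisfies $(A_\Theta)^*\subseteq A^*$, so it is again a proper extension and it suffices to identify the associated relation. A vector $g\in\dom(A^*)$ lies in $\dom((A_\Theta)^*)$ iff $(A^*f,g)_{\mathfrak H}=(f,A^*g)_{\mathfrak H}$ for all $f\in\dom(A_\Theta)$; by the Green identity this reads $(\gG_1f,\gG_0g)_{\cH}=(\gG_0f,\gG_1g)_{\cH}$ for all such $f$, i.e. $(u',\gG_0g)_{\cH}-(u,\gG_1g)_{\cH}=0$ for every $\{u,u'\}\in\Theta=\gG(\dom A_\Theta)$. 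By the definition of the adjoint relation $\Theta^*$, this says exactly $\{\gG_0g,\gG_1g\}\in\Theta^*$, whence $(A_\Theta)^*=A_{\Theta^*}$. Assertion (ii) is then immediate from the bijection: $A_\Theta$ is symmetric iff $A_\Theta\subseteq(A_\Theta)^*=A_{\Theta^*}$, i.e. iff $\Theta\subseteq\Theta^*$, and self-adjointness corresponds to $\Theta=\Theta^*$.

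Finally, for (iii) I would translate the disjointness hypothesis into a statement about $\mul(\Theta)$. Since $A_0=A^*\!\upharpoonright\ker\gG_0$ corresponds under \eqref{ext} to the pure relation $\{0\}\times\cH$, a vector $f$ lies in $\dom(A_\Theta)\cap\dom(A_0)$ iff $\gG_0f=0$ and $\gG_1f\in\mul(\Theta)$; using surjectivity of $\gG$ one sees that this intersection reduces to $\dom(A)$ precisely when $\mul(\Theta)=\{0\}$. A closed relation with trivial multivalued part is the graph of a closed operator $B\in\cC(\cH)$, and then $\{\gG_0f,\gG_1f\}\in\Theta$ is equivalent to $\gG_1f=B\gG_0f$, i.e. $f\in\ker(\gG_1-B\gG_0)$. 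This yields $A_\Theta=A_B=A^*\!\upharpoonright\ker(\gG_1-B\gG_0)$, completing the proof.
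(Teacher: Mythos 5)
The paper itself gives no proof of Proposition \ref{propo}: it is quoted as a known result from \cite{DerMal91,GG}, so your attempt can only be measured against the standard argument in those sources. Your route is precisely that standard one --- the kernel identity $\ker\Gamma_0\cap\ker\Gamma_1=\dom(A)$, the explicit inverse $\Theta\mapsto A^*\upharpoonright\Gamma^{-1}(\Theta)$, the Green-identity computation of $(A_\Theta)^*$, and the reduction of disjointness to $\mul(\Theta)=\{0\}$ --- and your proofs of the kernel identity and of assertions \textit{(i)}, \textit{(ii)}, \textit{(iii)} are correct.

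There is, however, a genuine gap at exactly the step you flag as the delicate one. Boundedness of $\Gamma$ in the graph norm gives only one half of the closedness correspondence: preimages of closed relations are graph-closed, so $\Theta$ closed implies $A_\Theta$ closed. For \eqref{ext} to be a bijection between $\Ext_A$ and $\widetilde{\cC}(\cH)$ you also need the forward implication: if $\widetilde{A}\in\Ext_A$ is closed, then $\Gamma(\dom(\widetilde{A}))$ is a \emph{closed} relation (equivalently, $A_\Theta$ closed forces $\Theta$ closed). This does not follow from continuity of $\Gamma$ --- a bounded operator can map a closed subspace onto a non-closed one --- and your mutual-inverse identities leave open the possibility that some closed extension corresponds to a non-closed relation, so the map \eqref{ext} is not yet known to take values in $\widetilde{\cC}(\cH)$. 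The repair uses only tools you already have, plus the open mapping theorem: $\Gamma$ is a bounded surjection of the Hilbert space $\dom(A^*)$ (graph norm) onto $\cH\oplus\cH$ with kernel $\dom(A)$, hence the induced map from the quotient $\dom(A^*)/\dom(A)$ onto $\cH\oplus\cH$ is a topological isomorphism; under it, graph-closed subspaces containing $\dom(A)$ correspond to closed relations in both directions, which settles the bijection at one stroke. Equivalently, the open mapping theorem provides a bounded right inverse $R$ of $\Gamma$: if $\Theta$ is not closed, pick $\{h_n,h'_n\}\in\Theta$ converging to $\{h,h'\}\in\overline{\Theta}\setminus\Theta$; then $R\{h_n,h'_n\}\in\dom(A_\Theta)$ converges in the graph norm to $R\{h,h'\}\notin\dom(A_\Theta)$, so $A_\Theta$ is not closed. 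A smaller remark of the same kind: invoking the closed graph theorem for boundedness of $\Gamma$ itself requires checking that $\Gamma$ has a closed graph, which again follows from the Green identity combined with surjectivity; this should be said rather than assumed.
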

\begin{remark} 
In the case  $n_{\pm}(A)=n<\infty$,
 any proper  extension $A_{\Theta}$ of the operator $A$ admits representation (see \cite{DerMal00})
\begin{equation}\label{cd}
A_{\Theta}:=A_{C,D}=A^*\upharpoonright\ker\bigl(D\Gamma_1-C\Gamma_0\bigr),\quad C,D \in \mathcal{B}(\mathcal{H}).
\end{equation}
Moreover, according to the Rofe-Beketov theorem \cite{R-B69} (see also  \cite[Theorem 125.4]{Akh78}), $A_{C,D}$
 is self-adjoint if and only if
 $C, D$, satisfy the following  conditions
\begin{equation}\label{sacond}
CD^*=DC^* \quad\text{and}\quad 0\in\rho(CC^*+DD^*).
\end{equation}
\end{remark}
\begin{definition}\cite{DerMal91}\label{Weylfunc}
Let $\Pi=\{\cH,\gG_0,\gG_1\}$ be a boundary triplet  for $A^*$. The operator-valued
function   $M(\cdot) :\rho(A_0)\rightarrow \mathcal{B}(\cH)$ defined by
\begin{equation*}\label{2.3A}
\Gamma_1f_z=M(z)\Gamma_0f_z,\quad\text{ for all } f_z\in\ker(A^*-zI),\qquad
z\in\rho(A_0),
      \end{equation*}
      is  called  \textit{the Weyl function}, corresponding to the triplet $\Pi.$
      \end{definition}
\begin{proposition}\cite{DerMal91, DerMal95}\label{prkf}
Let $A$ be  a densely defined nonnegative symmetric operator with finite deficiency
indices  in $\gotH$, and  let $\Pi=\{\cH,\gG_0,\gG_1\}$ be a boundary
 triplet for  $A^*$.
\, Let  also  $M(\cdot)$ be the corresponding Weyl function.  Then the  following assertions
hold.
\begin{itemize}
\item[\textit{(i)}]  There exist  strong   resolvent  limits
\begin{equation}\label{W_lim}
M(0):=s-R-\lim\limits_{x\uparrow 0}M(x), \qquad M(-\infty):=s-R-\lim\limits_{x\downarrow
-\infty}M(x).
\end{equation}
\item[\textit{(ii)}]  $M(0)$   and $M(-\infty)$ are
 self-adjoint linear relations in $\cH$ associated with  the
 semibounded  below (above) quadratic  forms
 $$\mathrm{t}_0[f]=\lim\limits_{x\uparrow 0}(M(x)f,f)\geq \beta||f||^2, \quad \mathrm{t}_{-\infty}[f]=\lim\limits_{x\downarrow
 -\infty}(M(x)f,f)\leq \alpha ||f||^2,$$  and
 \begin{align*}\label{Weylform}
 \dom(\mathrm{t}_0)&=\big\{f\in \cH:\,\lim\limits_{x\uparrow 0}|(M(x)f,f)|<\infty\big\}=\dom((M(0)_{\op}-\beta)^{1/2}),\\
\dom(\mathrm{t}_{-\infty})&=\big\{f\in \cH:\,\lim\limits_{x\downarrow
 -\infty}|(M(x)f,f)|<\infty\big\}=\dom((\alpha-M(-\infty)_{\op})^{1/2}).
 \end{align*}
 Moreover,
 \begin{align*}
\dom(A_K)&=\{f\in\dom(A^*):\,\,\{\Gamma_0f,\Gamma_1f\}\in M(0)\},\\
\dom(A_F)&=\{f\in\dom(A^*):\,\,\{\Gamma_0f,\Gamma_1f\}\in M(-\infty)\}.
 \end{align*}
 \item[\textit{(iii)}] Extensions
$A_0$  and $A_K$  are  disjoint  ($A_0$ and $A_F$ are disjoint) if and only if
$$M(0)\in\cC(\cH) \qquad (M(-\infty)\in\cC(\cH),\ \  \mbox{respectively}).$$
Moreover,
\begin{align*}
\dom(A_K)&=\dom(A^*)\upharpoonright\ker(\Gamma_1-M(0)\Gamma_0)\\
(\dom(A_F)&=\dom(A^*)\upharpoonright\ker(\Gamma_1-M(-\infty)\Gamma_0),\ \
\mbox{respectively}).
\end{align*}
\item[\textit{(iv)}]  $A_0=A_K$   ($A_0=A_F$) if  and only  if
 \begin{align*} 
 \lim_{x\uparrow0}(M(x)f,f)&=+\infty,\quad f\in\cH\setminus\{0\} \\
 (\lim_{x\downarrow-\infty}(M(x)f,f)&=-\infty, \quad f\in\cH\setminus\{0\}, \quad \mbox{respectively}).
 \end{align*}
 \item[\textit{(v)}]   If, in addition, $A_0=A_F$  ($A_0=A_K$), then  the set of all non-negative self-adjoint extensions of $A$
 admits
 parametrization
 \eqref{ext},  where $\Theta$  satisfies
\begin{equation}\label{kappa1}
 \Theta-M(0)\geq 0\qquad  (\Theta-M(-\infty)\leq 0,\,\, \mbox{respectively}).
 \end{equation}
  Moreover, if \eqref{kappa1} does not hold, the number of negative eigenvalues of arbitrary self-adjoint extension $\kappa_-(A_\Theta)$ is given by
  \begin{equation*}\label{kappa}
 \kappa_-(A_\Theta)=\kappa_-(\Theta-M(0))\quad  (\kappa_-(A_\Theta)=\kappa_-(M(-\infty)-\Theta),\,\, \mbox{respectively}).
 \end{equation*}
 \end{itemize}
\end{proposition}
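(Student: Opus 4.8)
The plan is to build everything on two classical objects attached to the triplet $\Pi$: the $\gamma$-field $\gamma(z)=(\Gamma_0\!\upharpoonright\ker(A^*-z))^{-1}$, which satisfies $M'(x)=\gamma(x)^*\gamma(x)\geq 0$ on the interval of holomorphy in $(-\infty,0)$, and the Krein-type resolvent formula
\[
(A_\Theta - x)^{-1} = (A_0 - x)^{-1} + \gamma(x)(\Theta - M(x))^{-1}\gamma(\bar x)^*, \qquad x<0 .
\]
From the derivative identity, $M(\cdot)$ is a non-decreasing $\mathcal{B}(\cH)$-valued function on $(-\infty,0)$, so for part \textit{(i)} I would invoke the monotone-convergence theorem for self-adjoint operators: a non-decreasing family bounded from below admits a strong resolvent limit that is a self-adjoint \emph{relation}, the multivalued part $\mul$ absorbing exactly the directions along which $(M(x)f,f)\to+\infty$. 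Applying this as $x\uparrow 0$ produces $M(0)$, and after reversing the monotonicity, as $x\downarrow-\infty$, produces $M(-\infty)$.

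For the form statements in \textit{(ii)}, since $M(x)$ increases to $M(0)$, the limiting form $\mathrm{t}_0[f]=\sup_{x<0}(M(x)f,f)$ is a monotone limit of bounded forms, hence closed and bounded below by $(M(x_0)f,f)\geq-\|M(x_0)\|\,\|f\|^2=:\beta\|f\|^2$ for a fixed $x_0<0$; its finiteness set is precisely the form domain $\dom((M(0)_{\op}-\beta)^{1/2})$ by Kato's first representation theorem applied to the operator part $M(0)_{\op}$ on $\cH_{\op}=\overline{\dom(\mathrm{t}_0)}$. The same computation with reversed monotonicity handles $\mathrm{t}_{-\infty}$ and $M(-\infty)$.

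The substantive part of \textit{(ii)} is the identification $A_K=A_{M(0)}$ and $A_F=A_{M(-\infty)}$. Here I would feed $x\uparrow 0$ into the resolvent formula and combine it with the Birman--Krein--Vishik ordering, by which $A_F$ is the largest and $A_K$ the smallest non-negative self-adjoint extension. Since larger $\Theta$ corresponds (through $(\Theta-M(x))^{-1}$) to a smaller resolvent and hence a larger extension, the smallest admissible boundary relation compatible with non-negativity is $M(0)=\sup_{x<0}M(x)$ itself, which must therefore yield $A_K$; dually the purely multivalued limit $M(-\infty)$ yields $A_F$. This gives the membership description of $\dom(A_K),\dom(A_F)$, and once $M(0)$ is single-valued it upgrades to the kernel representation $A_K=A^*\!\upharpoonright\ker(\Gamma_1-M(0)\Gamma_0)$, which is exactly the disjointness content of \textit{(iii)}: $A_0$ and $A_K$ are disjoint iff $\mul(M(0))=\{0\}$, i.e.\ $M(0)\in\cC(\cH)$. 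Part \textit{(iv)} is then the extreme case, where $M(0)$ degenerates to the pure relation $\{0\}\times\cH$, equivalently $\dom(\mathrm{t}_0)=\{0\}$, i.e.\ $(M(x)f,f)\to+\infty$ for every $f\neq 0$.

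For \textit{(v)}, assuming $A_0=A_F$, I would read the negative spectrum of $A_\Theta$ directly off the resolvent formula: for $x<0$, $x\in\sigma_p(A_\Theta)$ iff $0\in\sigma_p(\Theta-M(x))$, and a sign-counting argument based on the monotonicity of $M(x)$ together with its continuity in $x$ yields $\kappa_-(A_\Theta)=\kappa_-(\Theta-M(0))$; the non-negativity case $\kappa_-(A_\Theta)=0$ then recovers the parametrization \eqref{kappa1}. The hard part will be precisely this last counting step: one must control how the eigenvalues of $\Theta-M(x)$ cross zero as $x$ ranges over $(-\infty,0)$ and, in the limit $x\uparrow 0$, reconcile the possible sudden appearance of a multivalued part of $M(0)$ with the eigenvalue balance. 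This is where the finite deficiency index $n_\pm(A)=p$ and the strong resolvent convergence of the Weyl functions established in \textit{(i)} must be used together to guarantee that no negative eigenvalues are created or lost in the passage to the limit.
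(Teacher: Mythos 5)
There is nothing to compare against inside the paper itself: Proposition~\ref{prkf} is stated with the citations \cite{DerMal91,DerMal95} and used as a black box, so the only meaningful benchmark is the Derkach--Malamud originals. Your outline does follow their strategy in broad terms --- monotone convergence of the Weyl function to a self-adjoint relation, the representation theorem for the limit forms, Krein's resolvent formula, and an eigenvalue-crossing count --- so the route is the right one. As a proof, however, it has genuine gaps, two of which are fatal as written.

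First, the claim that $M(-\infty)$ is ``purely multivalued'' is false in general: $\mul(M(-\infty))$ consists only of those directions $f$ along which $(M(x)f,f)\to-\infty$, and $M(-\infty)=\{0\}\times\cH$ holds precisely when $A_0=A_F$ --- that is assertion \textit{(iv)}, not a general fact. Had your claim been true, \textit{(ii)} would give $A_F=A_{M(-\infty)}=A^*\upharpoonright\ker(\Gamma_0)=A_0$ for every boundary triplet, which is absurd. Second, the identification $A_K=A_{M(0)}$ is circular: the ``ordering principle'' you invoke (non-negativity of $A_\Theta$ iff $\Theta\geq M(0)$, smallest admissible $\Theta$ giving the smallest extension) is essentially assertion \textit{(v)} itself, and your proof of \textit{(v)} is deferred to a sign-counting argument that you explicitly flag as the hard part and never carry out. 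That count --- the multiplicity matching $\dim\ker(A_\Theta-x)=\dim\ker(\Theta-M(x))$ for $x<0$, strict monotonicity of the eigenvalue branches of $\Theta-M(x)$ coming from $M'(x)=\gamma(x)^*\gamma(x)>0$, the fact that $\kappa_-(\Theta-M(x))=0$ for $x\ll 0$ when $A_0=A_F$ (here finite deficiency indices and compactness of the unit sphere of $\cH$ are needed for uniform divergence), and the bookkeeping for $\mul(\Theta)$ and $\mul(M(0))$ in the limit $x\uparrow 0$ --- is precisely the mathematical content of the proposition; without it neither \textit{(ii)} nor \textit{(v)} is established. Two smaller points: the non-negativity of $A_{M(0)}$ itself needs an argument (for instance, $M(0)-M(x)\geq M(x')-M(x)>0$ for $x<x'<0$ shows $\ker(M(0)-M(x))=\{0\}$, so $A_{M(0)}$ has no negative eigenvalues), and your blanket use of holomorphy and monotonicity of $M$ on all of $(-\infty,0)$ tacitly assumes $A_0\geq 0$, which is not among the hypotheses --- one only knows $A_0$ has finitely many negative eigenvalues, so the limits must be taken along $(-\varepsilon,0)$ and $(-\infty,-R)$.
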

\begin{remark}
We should mention that   the existence of the  limits in \eqref{W_lim} follows from
finiteness of deficiency indices of
 $A$.
\end{remark}
\begin{remark}
Note that if the lover bound of $A$ is zero and the spectrum of $A_F$ is purely  discrete, then $A_F$ and $A_K$
are not disjoint. In this case $M(0)$ is a linear relation if $A_0\geq 0.$
\end{remark}

  \begin{corollary}\label{negind}
 Let assumptions of  Proposition \ref{prkf} hold and $A_0=A_K$. Let also $A_{C,D}$  be self-adjoint extension of $A$ defined by \eqref{cd}. Then $A_{C,D}$ is non-negative if and only if
 \begin{equation}\label{poscond}CD^*-DM(-\infty)D^*\leq 0.\end{equation} Moreover, if \eqref{poscond} does not hold, the number of negative eigenvalues of $A_{C,D}$ (counting multiplicities) coincides  with the number of positive eigenvalues of the linear relation
 $CD^*-DM(-\infty)D^*$, i.e.,
\begin{equation*}\label{indic}
\kappa_-(A_{C,D})=\kappa_+(CD^*-DM(-\infty)D^*).
\end{equation*}
\end{corollary}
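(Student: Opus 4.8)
The plan is to transport the abstract non-negativity criterion of Proposition~\ref{prkf}(v) through the bijection of Proposition~\ref{propo} and then rewrite the resulting inequality between linear relations as the matrix inequality \eqref{poscond}. First I would identify the linear relation $\Theta$ associated with $A_{C,D}$. Since $A_{C,D}=A^*\upharpoonright\ker(D\Gamma_1-C\Gamma_0)$ and $\Gamma=(\Gamma_0,\Gamma_1)^\top$ is surjective, the image $\Theta=\Gamma(\dom(A_{C,D}))$ is exactly
\[
\Theta=\Theta_{C,D}:=\bigl\{\{h,h'\}\in\cH\oplus\cH:\ Ch=Dh'\bigr\}.
\]
Because $A_0=A_K$, the Krein branch of Proposition~\ref{prkf}(v) applies and gives that $A_{C,D}=A_\Theta$ is non-negative if and only if $\Theta-M(-\infty)\le0$, and that in general $\kappa_-(A_{C,D})=\kappa_-(M(-\infty)-\Theta)=\kappa_+(\Theta-M(-\infty))$. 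Thus everything reduces to computing the inertia of the self-adjoint relation $\Theta-M(-\infty)$.

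The key algebraic step is to produce a convenient linear bijection onto $\Theta_{C,D}$. The self-adjointness conditions \eqref{sacond} are precisely what is needed: from $CD^*=DC^*$ one checks directly that $\{D^*\xi,C^*\xi\}\in\Theta_{C,D}$ for every $\xi\in\cH$, while $0\in\rho(CC^*+DD^*)$ forces $\ker C^*\cap\ker D^*=\{0\}$, so the map $\xi\mapsto\{D^*\xi,C^*\xi\}$ is injective. A dimension count (a self-adjoint relation in $\cH\oplus\cH$ has dimension equal to $\dim\cH$) then shows that this map is a bijection $\cH\to\Theta_{C,D}$. Writing $M:=M(-\infty)$ and evaluating the quadratic form of $\Theta-M$ at $\{h,h'\}=\{D^*\xi,C^*\xi\}$ I would obtain, using $DC^*=CD^*$ and the form $\mathrm{t}_{-\infty}$ of Proposition~\ref{prkf}(ii),
\[
(h'-g',h)=(C^*\xi,D^*\xi)-\mathrm{t}_{-\infty}[D^*\xi]=\bigl((CD^*-DM(-\infty)D^*)\xi,\xi\bigr),
\]
where $\{h,g'\}\in M$.

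Since $\xi\mapsto\{D^*\xi,C^*\xi\}$ is a linear bijection onto $\Theta$, it identifies the form of the relation $\Theta-M(-\infty)$ with the form of the matrix $CD^*-DM(-\infty)D^*$, so the two have the same inertia; in particular $\kappa_+(\Theta-M(-\infty))=\kappa_+(CD^*-DM(-\infty)D^*)$ and $\Theta-M(-\infty)\le0\Leftrightarrow CD^*-DM(-\infty)D^*\le0$. Combining this with the first paragraph yields both the non-negativity criterion \eqref{poscond} and the index formula.

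The main obstacle is the bookkeeping when $M(-\infty)$ is a genuine linear relation and $D$ is singular. Then $\mathrm{t}_{-\infty}[D^*\xi]$ can equal $-\infty$ (namely when $D^*\xi$ has a nonzero component in $\mul(M(-\infty))$), and one must check that $CD^*-DM(-\infty)D^*$ is a well-defined self-adjoint matrix; this forces the compatibility condition $\ran D^*\subseteq\dom(M(-\infty))$, equivalently $\mul(M(-\infty))\subseteq\ker D$, under which $g'$ above is determined modulo $\mul(M)\perp D^*\xi$, so that $(g',h)$ is unambiguous. I expect the careful treatment of this multivalued and singular case, together with the verification that the inertia of a self-adjoint relation is preserved under the bijective pullback, to be the delicate part. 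The complementary, transparent case is $D$ invertible and $M$ an operator: there $\Theta$ is the graph of the self-adjoint operator $B=D^{-1}C$, and $D(B-M)D^*=CD^*-DMD^*$ is an honest congruence, so Sylvester's law of inertia finishes the argument at once.
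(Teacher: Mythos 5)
Your overall route is the one the paper intends: the corollary is stated without proof precisely because it is the specialization of Proposition \ref{prkf}(v) to the Rofe-Beketov representation \eqref{cd}, and your three steps (identify $\Theta_{C,D}=\bigl\{\{h,h'\}:Ch=Dh'\bigr\}$, parametrize it as $\bigl\{\{D^*\xi,C^*\xi\}:\xi\in\cH\bigr\}$ via \eqref{sacond}, and transport the form through this congruence, with Sylvester's law giving the inertia statement) are exactly that derivation. In the case $\ran D^*\subseteq\dom(M(-\infty))$ --- in particular whenever $M(-\infty)$ is an operator, which is what happens in the paper's application to the Jacobi operator, where $M(-\infty)=\alpha$ --- your argument is complete and correct.

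However, your handling of the remaining case is a genuine error, not merely an omission. You assert that well-definedness of $CD^*-DM(-\infty)D^*$ \emph{forces} the compatibility condition $\ran D^*\subseteq\dom(M(-\infty))$, i.e. $\mul(M(-\infty))\subseteq\ker D$. Nothing in the hypotheses implies this condition, and the corollary deliberately calls $CD^*-DM(-\infty)D^*$ a \emph{linear relation}: when the condition fails, this relation has multivalued part $D\,\mul(M(-\infty))$, and those directions must be counted in $\kappa_+$ (they come from vectors $D^*\xi$ with $\mathrm{t}_{-\infty}[D^*\xi]=-\infty$, where the form of $\Theta-M(-\infty)$ is $+\infty$). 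Reading the difference as a relation restricted to $\dom(\Theta)\cap\dom(M(-\infty))$, as your computation implicitly does, undercounts. A concrete test: let $p=1$ and $A_F=A_K=A_0$, so that $M(-\infty)=\{0\}\times\mathbb{C}$ is a pure relation, and take $D=1$, $C=b\in\mathbb{R}$. Then $A_{C,D}\neq A_0$ is self-adjoint, hence by Krein's theorem it is not non-negative and $\kappa_-(A_{C,D})=1$; correspondingly $CD^*-DM(-\infty)D^*=\{0\}\times\mathbb{C}$ has $\kappa_+=1$ only if its multivalued part is counted as a positive (that is, $+\infty$) eigenvalue, whereas its operator part --- and the form on $\dom(\Theta)\cap\dom(M(-\infty))=\{0\}$ --- give $0$ and would wrongly certify $A_{C,D}\geq0$. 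To close the proof you should (a) adopt the convention $\kappa_+(N)=\kappa_+(N_{\op})+\dim\mul(N)$ for $N=CD^*-DM(-\infty)D^*$ (and correspondingly for $\Theta-M(-\infty)$), and (b) run your pullback over all of $\ran D^*$, assigning the value $+\infty$ outside $\dom(\mathrm{t}_{-\infty})$, rather than impose a hypothesis that excludes this case.
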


\begin{corollary}\label{krein}\cite{DerMal95}
Suppose that $A_F$ has  purely  discrete spectrum. Then the Krein
extension $A_K$ is given by
 \begin{equation*}
 \dom(A_K)=\dom(\overline{A})\dotplus \ker(A^*).
 \end{equation*}
 Moreover, the spectrum of  $A_K\upharpoonright \ker(A^*)^{\bot}$ is purely discrete.
\end{corollary}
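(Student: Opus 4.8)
The plan is to convert the spectral hypothesis on $A_F$ into a Fredholm (closed-range) statement for $\overline{A}$ at the origin, to read off the domain decomposition from the Krein--von Neumann description of $A_K$, and to obtain the discreteness of the reduced operator from the extension-independence of the essential spectrum. First I would record that, since $A_F$ is the maximal non-negative self-adjoint extension, its lower bound equals that of $A$, namely $0$; hence purely discrete spectrum of $A_F$ means $\sigma(A_F)\subseteq\{0\}\cup[\varepsilon,\infty)$ for some $\varepsilon>0$, so that $0$ is isolated in $\sigma(A_F)$ and $\sigma_{\mathrm{ess}}(A_F)=\emptyset$.

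The crucial step, and the one I expect to be the main obstacle, is to transfer information from $A_F$ to $\overline{A}$ and $A_K$. Here I would use that $A$ has finite deficiency indices ($n_\pm\le p$): by Krein's resolvent formula any two self-adjoint extensions of $A$ have resolvents differing by a finite-rank operator, so by Weyl's theorem all self-adjoint extensions of $A$ share the same essential spectrum. Consequently $\sigma_{\mathrm{ess}}(A_K)=\sigma_{\mathrm{ess}}(A_F)=\emptyset$, i.e. $A_K$ already has purely discrete spectrum, and moreover $0$ is a Fredholm point of the symmetric operator $\overline{A}$. Since $\dim\ker(A^{*})=\dim(\ran(\overline{A}))^{\bot}=n\le p<\infty$, Fredholmness at $0$ forces $\ran(\overline{A})$ to be closed, and then $\ran(\overline{A})=\overline{\ran(\overline{A})}=\ker(A^{*})^{\bot}$, using $(\overline{A})^{*}=A^{*}$.

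With the closed-range property in hand the decomposition is essentially the Krein--von Neumann construction. The Krein extension is the unique non-negative self-adjoint extension of $A$ with $\ker(A_K)=\ker(A^{*})$, and on the closed subspace $\ker(A^{*})^{\bot}=\ran(\overline{A})$ it agrees with $\overline{A}$; explicitly $A_K(f+\varphi)=\overline{A}f$ for $f\in\dom(\overline{A})$ and $\varphi\in\ker(A^{*})$. Because the minimal operator has trivial kernel (no nonzero element of $\ker(A^{*})$ lies in $\dom(\overline{A})$), one has $\dom(\overline{A})\cap\ker(A^{*})=\ker(\overline{A})=\{0\}$, so the sum is direct and $\dom(A_K)=\dom(\overline{A})\dotplus\ker(A^{*})$. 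Finally, the identity $\ker(A_K)=\ker(A^{*})$ means that $\ker(A^{*})$ reduces the self-adjoint operator $A_K$, which therefore splits as $0\oplus\bigl(A_K\!\upharpoonright\ker(A^{*})^{\bot}\bigr)$; since $A_K$ has empty essential spectrum, so does its restriction, and removing the finite-dimensional kernel leaves a positive operator with purely discrete spectrum, as claimed. (The same conclusion can be phrased within the paper's language by noting, as in the preceding Remark, that under these hypotheses $A_F$ and $A_K$ are not disjoint and $M(0)$ is a genuine linear relation, whose multivalued part encodes $\ker(A^{*})$.)
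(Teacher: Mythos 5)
The paper does not prove this corollary at all---it is quoted from \cite{DerMal95}---so your argument has to stand entirely on its own. Its outer frame is sound in the paper's setting $n_\pm(A)\le p<\infty$: Krein's resolvent formula plus Weyl's theorem does give $\sigma_{\mathrm{ess}}(A_K)=\sigma_{\mathrm{ess}}(A_F)=\emptyset$; closedness of $\ran(\overline{A})$ does follow from $0\notin\sigma_{\mathrm{ess}}(A_F)$, since $\overline{A}$ is a closed restriction of $A_F$, an operator with closed range and finite-dimensional kernel; and the final step ($\ker(A^*)$ reduces $A_K$, and a reduced part of an operator with empty essential spectrum has purely discrete spectrum) is correct. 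Note, however, that the Weyl-theorem step genuinely needs finite deficiency indices, which you import from the Jacobi sections; the corollary as stated does not assume this, and the result of \cite{DerMal95} does not require it.

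The genuine gap is the middle step---which is exactly the assertion being proved---where you write down $\dom(A_K)=\dom(\overline{A})+\ker(A^*)$ with $A_K(f+\varphi)=\overline{A}f$ and justify it by the principle that $A_K$ is \emph{the unique} non-negative self-adjoint extension with kernel equal to $\ker(A^*)$. That principle is false in general: for the minimal operator of $-d^2/dx^2$ in $L^2(0,\infty)$ one has $\ker(A^*)=\{0\}$, and \emph{every} one of its infinitely many non-negative self-adjoint extensions ($u'(0)=\tau u(0)$ with $\tau\ge 0$, and Dirichlet) has trivial kernel, so the kernel condition singles out nothing. The principle does become true once one knows $\ran(\overline{A})=\ker(A^*)^{\bot}$, but that implication is the heart of the corollary and must be written out; a correct completion runs: set $B(f+\varphi):=\overline{A}f$ on $\dom(\overline{A})+\ker(A^*)$, which is well defined, symmetric, non-negative, and a restriction of $A^*$; it is self-adjoint because for $v\in\dom(B^*)$ one gets first $B^*v\perp\ker(A^*)$, hence $B^*v=\overline{A}f_0$ by closedness of the range, and then $(\overline{A}f,v-f_0)=0$ for all $f\in\dom(\overline{A})$, so $v-f_0\in\ran(\overline{A})^{\bot}=\ker(A^*)$ and $v\in\dom(B)$; finally \eqref{domA_K} gives $\dom(\overline{A})+\ker(A^*)\subseteq\dom(A_K)$, so $B\subseteq A_K$, and self-adjointness of both forces $B=A_K$. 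A second unsupported claim is that ``the minimal operator has trivial kernel'': for abstract non-negative $A$ this can fail (take $A=A_1\oplus 0$ with a one-dimensional zero summand; then $A_F$ still has purely discrete spectrum, yet $\ker(\overline{A})\ne\{0\}$ and the sum $\dom(\overline{A})+\ker(A^*)$ is \emph{not} direct), and it is precisely what the $\dotplus$ in the statement rests on; for $T_{\min}$ it holds but needs the one-line argument that $u\in\ker(T_{\min})$ implies $u\in\ker(T_{\max})$ and $0=\Gamma_1 u=-u_0$, whence $u=0$.
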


\section{Extremal extensions of $T_{\min}$}
As usual, we  denote by $(P_j(z))$ the  solution  to the  matrix equation
\[(\mathbf{J}U)_j=zU_j,\quad j\geq 0\]
with the initial conditions  $P_0(z)=I_p,\,\,P_1(z)=B_0^{-1}(zI_p-A_0).$  Here $I_p\in
\mathbb{C}^{p\times p}$   is the identity  matrix. Furthermore, we denote by $(Q_j(z))$
the solution to
\[(\mathbf{J}U)_j=zU_j,\,\,j\geq 1\]  with  $Q_0(z)=O_p$ and $Q_1(z)=B_0^{-1}$.  The  matrix functions $P_j(z)$ and $Q_j(z)$
are  polynomials in the  complex variable $z$ of degree $j$  and $j-1$, respectively,
with  matrix coefficients. We mention that   $(P_j(z))$  and  $(Q_j(z))$   are called
matrix polynomials of the first  and   second kind, respectively.

 Following   \cite{DerMal92}, we  define a   boundary  triplet  $\Pi=\{\cH,\Gamma_0,\Gamma_1\}$ for $T_{\max}$ by setting
\begin{equation}\label{tripl}
\mathcal{H}=\mathbb{C}^p,\qquad \Gamma_1u=(Q(0))^* T_{\max}u-P_0u,\qquad
\Gamma_0u=(P(0))^* T_{\max}u,
\end{equation}
where $u\in\dom(T_{\max})$
 and  $P_0$  is  orthoprojection  in
$\bigoplus\limits_{j=0}^\infty\mathcal{H}_j$ onto
$\mathcal{H}_0$, in which $\mathcal{H}_j=\mathbb{C}^p$.

We  should  note  that the mappings  $(P(0))^*$  and $(Q(0))^*$  act as infinite
$"p\times \infty"$ matrices, i.e., $(P(0))^*, (Q(0))^*:\,l^2_p\rightarrow \mathbb{C}^p$.
Each their row is  "constructed"  by the corresponding rows of $P_j^*(0)$  and
$Q_j^*(0)$, respectively.

It is easily seen that
\begin{equation*}
\Gamma_1(P_j(z))=z\sum\limits_{j=1}^{\infty}Q_j^*(0)P_j(z)-I_p,\qquad
\Gamma_0(P_j(z))=z\sum\limits_{j=0}^{\infty}P_j^*(0)P_j(z),
\end{equation*}
and, by  Definition \ref{Weylfunc}, we get
 \begin{multline*}
\qquad\qquad\qquad\qquad M(z)=\Gamma _1(P_j(z))(\Gamma
_0(P_j(z)))^{-1}\\=\left(z\sum\limits_{j=1}^{\infty}Q_j^*(0)P_j(z)-I_p\right)\cdot
\left(z\sum\limits_{j=0}^{\infty}P_j^*(0)P_j(z)\right)^{-1}.\qquad
\end{multline*}
Applying Proposition \ref{prkf}  to the  operator $T_{\min}$, we obtain  the  following result.
\begin{theorem}\label{kre}
 Let  $\Pi=\{\mathcal{H},\Gamma_0,\Gamma_1\}$  be the  boundary triplet for $T_{\max}$  given by \eqref{tripl} and let $M(\cdot)$ be the corresponding Weyl function.
  Then the following assertions hold.
\begin{itemize}
\item[\textit{(i)}]
The Krein extension $T_K$  coincides with  $T_0$, i.e.,
  \[\dom(T_K)=\dom(T_{\max})\upharpoonright \ker(\Gamma_0)=\{u\in T_{\max}:\,(P(0))^*T_{\max}u=0\}.\]
\item[\textit{(ii)}] Self-adjoint extension  $T_{\Theta}$ is non-negative if and only
if it admits representation
\begin{equation*}
T_{\Theta}=T_{C,D}=T_{\max}\upharpoonright\ker(D\Gamma_1-C\Gamma_0),
\end{equation*}
 where $C, D$ satisfy \eqref{sacond} and \eqref{poscond}.
 \item[\textit{(iii)}] The number of negative eigenvalues of  $T_{C,D}=T_{C,D}^*$ (defined by \eqref{cd}) coincides with the number of positive eigenvalues of the linear relation
 $CD^*-DM(-\infty)D^*$, i.e.,
\begin{equation*}
\kappa_-(T_{C,D})=\kappa_+(CD^*-DM(-\infty)D^*).
\end{equation*}
\end{itemize}
\end{theorem}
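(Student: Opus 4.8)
The plan is to deduce all three assertions from Proposition~\ref{prkf} and Corollary~\ref{negind}, so that the entire argument rests on the single structural fact that, for the triplet \eqref{tripl}, the reference extension $T_0=T_{\max}\!\upharpoonright\ker(\Gamma_0)$ coincides with the Krein extension $T_K$. Granting $T_0=T_K$, assertion (i) is then just the description of $\ker(\Gamma_0)$ read off from \eqref{tripl}. For (ii) I would combine the Rofe-Beketov representation of the Remark following Proposition~\ref{propo} (since $n_\pm(T_{\min})=p<\infty$, every self-adjoint extension has the form $T_{C,D}$ with $C,D$ obeying \eqref{sacond}) with Corollary~\ref{negind}, whose hypothesis $A_0=A_K$ is now in force: it characterizes non-negativity of $T_{C,D}$ by \eqref{poscond}. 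Assertion (iii) is then literally the ``moreover'' clause of Corollary~\ref{negind}, namely $\kappa_-(T_{C,D})=\kappa_+(CD^*-DM(-\infty)D^*)$.

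To prove $T_0=T_K$ I would invoke the criterion of Proposition~\ref{prkf}(iv): it suffices to show $\lim_{x\uparrow 0}(M(x)f,f)=+\infty$ for every $f\in\mathbb{C}^p\setminus\{0\}$. Starting from the explicit Weyl function, I write $M(x)=N(x)D(x)^{-1}$ with $N(x)=x\sum_{j\ge 1}Q_j^*(0)P_j(x)-I_p$ and $D(x)=x\,S(x)$, where $S(x)=\sum_{j\ge 0}P_j^*(0)P_j(x)$. The completely indefinite hypothesis $n_\pm(T_{\min})=p$ forces every solution of $\mathbf{J}U=zU$ into $l^2_p$; in particular $(P_j(0)),(Q_j(0))\in l^2_p$, so all these matrix series converge (locally uniformly, hence analytically in $x$ near $0$) and $P_j(x)\to P_j(0)$. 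Consequently $S(x)\to\Sigma:=\sum_{j\ge 0}P_j^*(0)P_j(0)$ and $N(x)\to -I_p$; moreover $\Sigma\ge P_0^*(0)P_0(0)=I_p>0$, so $\Sigma$ is invertible and $S(x)$ stays invertible for $x$ close to $0$.

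The key algebraic step is to factor out the singularity: since
$M(x)=-\tfrac{1}{x}S(x)^{-1}+\bigl(\sum_{j\ge 1}Q_j^*(0)P_j(x)\bigr)S(x)^{-1}$,
the second summand remains bounded as $x\uparrow 0$ while the first carries the pole. As $M(x)$ is Hermitian for real $x\in\rho(T_0)$, the form $(M(x)f,f)$ is real, and taking real parts yields $(M(x)f,f)=-\tfrac{1}{x}\,\mathrm{Re}(S(x)^{-1}f,f)+O(1)$ with $\mathrm{Re}(S(x)^{-1}f,f)\to(\Sigma^{-1}f,f)>0$ for $f\neq 0$. Since $-1/x\to+\infty$ as $x\uparrow 0$, this forces $(M(x)f,f)\to+\infty$. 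By Proposition~\ref{prkf}(iv) I conclude $T_0=T_K$, and reading $\ker(\Gamma_0)$ off \eqref{tripl} completes (i); parts (ii) and (iii) then follow as described above.

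I expect the main obstacle to be the analytic bookkeeping in this last computation rather than any conceptual difficulty: one must justify that the matrix series defining $N$ and $S$ converge and are continuous (indeed analytic) up to $x=0$ — which is exactly the point at which $n_\pm=p$, i.e. the limit-circle-type situation with all solutions in $l^2_p$, enters — and that the $-\tfrac1x S(x)^{-1}$ term genuinely dominates the quadratic form in the Hermitian sense, so that the subdivergent contributions of $S(x)^{-1}$ (which near $0$ is $\Sigma^{-1}+O(x)$) do not spoil the limit. Once the divergent term is isolated in this way, the positivity $(\Sigma^{-1}f,f)>0$ delivers the required $+\infty$, and everything else reduces to the already-cited Proposition~\ref{prkf} and Corollary~\ref{negind}.
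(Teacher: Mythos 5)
Your proposal is correct and is essentially the paper's own argument: the paper's first proof of assertion (i) likewise passes to the limit $x\uparrow 0$ in the explicit formula for the Weyl function, justified exactly as you say by the uniform convergence of $\sum_j\|P_j(z)\|^2$ and $\sum_j\|Q_j(z)\|^2$ in the completely indefinite case $n_\pm(T_{\min})=p$, and it then obtains (ii) and (iii) from Corollary \ref{negind} combined with the Rofe--Beketov representation, just as you do. The only difference is cosmetic, namely which item of Proposition \ref{prkf} is invoked: the paper computes the strong limit $(M(x)+\gamma)^{-1}\to O_p$ as $x\uparrow 0$, so that $M(0)$ is the pure relation $\{0\}\times\mathcal{H}$, and applies item (ii), whereas you isolate the singular term $-\frac{1}{x}S(x)^{-1}$ (in your notation), show $(M(x)f,f)\to+\infty$ for $f\neq 0$, and apply item (iv) --- two interchangeable ways of expressing the same limiting behaviour of $M$ at $0$.
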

\begin{proof}
$(ii)$
The statement might be proven at least in two  different ways.

 ${\bf 1.}$ Since $M(x)$ is holomorphic  and increasing in $(-\varepsilon,0)$  (see \cite{DerMal95}), the strong  limit
 $s-\lim\limits_{x\uparrow 0}(M(x)+\gamma)^{-1}$  exists for any  $\gamma>0$. Namely,

\begin{multline}\label{M} M_{\gamma}(0):=s-\lim\limits_{x\uparrow
0}\big(M(x)+\gamma\big)^{-1}=s-\lim\limits_{x\uparrow
0}\left(x\sum\limits_{j=0}^{\infty}P_j^*(0)P_j(x)\right)\\
\times\left(x\sum\limits_{j=1}^{\infty}Q_j^*(0)P_j(x )-I_p+\gamma
x\sum\limits_{j=0}^{\infty}P_j^*(0)P_j(x)\right)^{-1}=O_p.
\end{multline}
Indeed, since  $n_\pm(T_{\min})=p$ (see \cite{KosMir98}), the series
$\sum\limits_{j=0}^{\infty}||P_j(z)||^2$  and $\sum\limits_{j=0}^{\infty}||Q_j(z)||^2$
converge uniformly on each bounded subset $\mathbb{C}$ (see, for instance, \cite[Theorem
1]{KosMir98}).  Therefore, we can pass to the limit  in  \eqref{M} under the sum sign
as $x\rightarrow 0$.
 Hence $M_{\gamma}^{-1}(0)=\{0,\cH\}=\{ \{0,f\}: f\in
 \cH\}$.  Since
 $$\dom(T_0)=\{u\in \dom(T_{\max}):\, \Gamma_0u=0\}=\{u\in \dom(T_{\max}): \{\Gamma_0u,\Gamma_1u\}\in\{0,\cH\}\},$$
 by Proposition \ref{prkf} $(ii)$, we arrive at  $T_K=T_0$.
 \\
 ${\bf 2.}$  Suppose, in addition, that $T\geq \varepsilon I>0$. Using the equality  $\dom(T_0)=\dom(T_{\max})\upharpoonright\ker(\Gamma_0)$,
we easily get  from \eqref{tripl} that $\ker(T_{\max})\subset \dom(T_0)$.  Therefore,
\eqref{A_K} implies  the inclusion $\dom(T_0)\supset \dom(T_K)$. Since  $T_0$ and $T_K$
are self-adjoint,  we arrive at  $T_K=T_0$.

$(ii)$ and $(iii)$  easily follow from Corollary \ref{negind}  and assertion $(i)$.
\end{proof}

Assume   now  that    $p=1$  and $A_j, B_j$ are positive real numbers. It is known that
$T_{\min}$ is connected  with some Stiltjes moment problem, see \cite{Akh65}. Briefly,
a Stiltjes moment problem has a following description. Given a sequence $\gamma_0,
\gamma_1, \gamma_2,... $ of reals. When is there a measure, $d\mu$  on $[0,\infty)$ so
that
\[\gamma_n=\int\limits_0^\infty x^n d\mu(x)\]
and if such a $\mu$ exists, is it unique?

The operator  $T_{\min}$  is self-adjoint  if  and only if associated Stiltjes  moment
problem is  determinate, i.e.,  it  has unique solution.
   Since  $n_\pm(T_{\min})=1$, the  determinacy  does not take place and, therefore,    sequence  $\frac{Q_j(0)}{P_j(0)}$
   converges  (see
\cite[Theorem 0.4, p. 293]{Akh65} or \cite[Section 3]{Berg94}).

The  existence  of this limit  is  the key fact for the description of the  Friedrichs
extension which  we are going  to present.
  In particular, the limit
\begin{equation}\label{a}
  \alpha:=\lim\limits_{j\rightarrow \infty}\frac{Q_j(0)}{P_j(0)}
\end{equation} is negative.  Indeed,  since all the zeros of the  polynomials $P_j(x)$ and $Q_j(x)$  lie in the
interval $[0,\infty)$ (see, \cite[Chapter I]{Chi_78}), $P_j(\cdot)$  and $Q_j(\cdot)$
do not change the sign  in $(-\infty, 0)$. Noticing that   $P_j(x)/Q_j(x)<0$   for $x<0$
large enough (the degrees  of  $P_j(\cdot)$  and $Q_j(\cdot)$ are $j$ and $j-1$,
respectively, and leading coefficients  equal $B_{j-1}^{-1}\cdot..\cdot B_0^{-1}>0$), we
get the negativity of $\alpha$.
\begin{theorem}\label{T_F}
Assume $p=1$.   Let also $\Pi=\{\mathcal{H},\Gamma_0,\Gamma_1\}$  be the  boundary
triplet for $T_{\max}$  defined by \eqref{tripl} and $M(\cdot)$ be the corresponding
Weyl function. The domain of the Friedrichs extension is given by
 \begin{equation}\label{fr}
 \dom(T_F)=\{u\in \dom(T_{\max}):\, (\Gamma_1-\alpha \Gamma_0)u=0\},
 \end{equation} 
where $\alpha$  is  defined by \eqref{a}.
\end{theorem}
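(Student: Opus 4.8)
The plan is to reduce \eqref{fr} to the identification of the limiting Weyl value $M(-\infty)$ and to pin that value down with a single well-chosen boundary element. By Proposition \ref{prkf}(ii), applied to $T_{\min}$ (which is densely defined, non-negative and has $n_\pm=1<\infty$), one has
\[
\dom(T_F)=\{u\in\dom(T_{\max}):\,\{\Gamma_0u,\Gamma_1u\}\in M(-\infty)\},
\]
with $M(-\infty)$ a self-adjoint linear relation in $\mathcal H=\mathbb C$. Hence it suffices to show that $M(-\infty)$ is the graph of multiplication by $\alpha$; Proposition \ref{prkf}(iii) then turns this into $\dom(T_F)=\ker(\Gamma_1-\alpha\Gamma_0)$, which is exactly \eqref{fr}.

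To identify $M(-\infty)$ I would produce one explicit element of $\dom(T_F)$ carrying the value $\alpha$. Put $\phi_j:=Q_j(0)-\alpha P_j(0)$, the minimal (principal) solution singled out by \eqref{a}. Using $P_0(0)=1$, $Q_0(0)=0$ and $(\mathbf{J} Q(0))_0=B_0Q_1(0)=1$, one checks $(\mathbf{J}\phi)_j=0$ for $j\geq1$ and $(\mathbf{J}\phi)_0=1$, i.e. $\mathbf{J}\phi=\mathbf{e}_0:=(1,0,0,\dots)$; since in the completely indeterminate case $\phi\in l^2$ and $\mathbf{e}_0\in l^2$, we get $\phi\in\dom(T_{\max})$ with $T_{\max}\phi=\mathbf{e}_0$. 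Substituting into \eqref{tripl} and using $\phi_0=-\alpha$ gives $\Gamma_0\phi=(P(0))^*\mathbf{e}_0=P_0(0)=1$ and $\Gamma_1\phi=(Q(0))^*\mathbf{e}_0-\phi_0=Q_0(0)-\phi_0=\alpha$. Thus $\{\Gamma_0\phi,\Gamma_1\phi\}=\{1,\alpha\}$, and if $\phi\in\dom(T_F)$ then this pair lies in $M(-\infty)$; as its first component $1$ is non-zero and $M(-\infty)$ is a self-adjoint relation in $\mathbb C$, the relation cannot be $\{0\}\times\mathbb C$ and must be the graph of the real number $\alpha$. This yields $M(-\infty)=\alpha$ and closes the argument.

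The remaining, and principal, step is to prove $\phi\in\dom(T_F)$, equivalently $\phi\in\dom(\mathfrak t)$ where $\mathfrak t$ is the closure of $u\mapsto(\mathbf{J} u,u)$; this is the boundary-triplet counterpart of the minimal-solution description of $T_F$ in \cite{BrChr05}. I would verify the two conditions in the definition of $\dom(T_F)$ by approximating $\phi$ with finitely supported $u_k$: one needs $\|u_k-\phi\|_{l^2}\to0$ and the form-Cauchy condition $\mathfrak t[u_j-u_k]\to0$. Summation by parts reduces $\mathfrak t[u_j-u_k]$ to discrete Wronskian boundary terms of the form $B_N\phi_N\phi_{N+1}$, whose size is controlled by the constant-Wronskian identity $B_N\big(P_N(0)\phi_{N+1}-P_{N+1}(0)\phi_N\big)=1$ together with the positivity of the entries ($A_j\geq0$, $B_j>0$) and the convergence \eqref{a}. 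The delicate point, which I expect to be the crux, is that a sharp truncation need not annihilate these boundary terms; a cutoff ramping down smoothly over a window should be used instead, and the tail energy $\tfrac1{k^2}\sum_{j\approx k}B_j\phi_j^2$ must be shown to vanish using the recessive behaviour of $\phi$ encoded in \eqref{a}.

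If a direct route is preferred, one can instead evaluate the monotone limit $M(-\infty)=\lim_{x\downarrow-\infty}M(x)$ of the scalar Nevanlinna function $M$. Writing $M(x)-\alpha=\big(x\sum_j\phi_jP_j(x)-1\big)\big(x\sum_jP_j(0)P_j(x)\big)^{-1}$ and converting numerator and denominator into limiting Wronskians of $P(x)$ against $\phi$ and $P(0)$ via the Christoffel--Darboux identity, the convergence $Q_j(0)/P_j(0)\to\alpha$ forces the ratio to $0$. Either way, the non-trivial analytic input is exactly the recessiveness of the minimal solution $\phi$ provided by \eqref{a}.
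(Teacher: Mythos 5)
Your reduction to the identification $M(-\infty)=\alpha$ is the same first step as the paper's, and your boundary computation is correct: for $\phi_j:=Q_j(0)-\alpha P_j(0)$ one has $\mathbf{J}\phi=(1,0,0,\dots)\in l^2_1$, so $\phi\in\dom(T_{\max})$, and \eqref{tripl} gives $\Gamma_0\phi=1$, $\Gamma_1\phi=\alpha$; since a self-adjoint linear relation in $\mathbb{C}$ containing the pair $\{1,\alpha\}$ with $\alpha$ real must be the graph of multiplication by $\alpha$, the inclusion $\phi\in\dom(T_F)$ would indeed force $M(-\infty)=\alpha$, and Proposition \ref{prkf}(iii) would then yield \eqref{fr}.

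The genuine gap is that you never prove $\phi\in\dom(T_F)$. Everything you write about this ``principal step'' is conditional (``I would verify'', ``must be shown to vanish'', ``I expect to be the crux''): no cutoff is constructed and no estimate is carried out. Membership of the minimal solution in the Friedrichs domain is not a routine verification --- it is precisely the main technical theorem of Brown and Christiansen \cite{BrChr05}, proved there under the stronger assumption $T\geq\varepsilon I>0$, i.e., exactly the result that Theorem \ref{T_F} is meant to re-derive and extend to the non-negative case; invoking it would be circular, and proving it is the hard part. Your sketched tail estimate is also doubtful as stated: the quantity $\frac1{k^2}\sum_{j\approx k}B_j\phi_j^2$ involves the off-diagonal entries $B_j$, which may grow arbitrarily fast, and the convergence \eqref{a} gives no decay rate for $\phi_j$, so square-summability of $\phi$ alone does not make it vanish. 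The same criticism applies to your fallback ``direct route'': ``the convergence $Q_j(0)/P_j(0)\to\alpha$ forces the ratio to $0$'' is an assertion, not an argument. The paper's proof consists exactly in carrying out this limit, and it does so with elementary ingredients your sketch omits: all zeros of $P_j$ and $Q_j$ lie in $[0,\infty)$, hence $P_j(0)P_j(x)=|P_j(0)P_j(x)|$ and $Q_j(0)P_j(x)=-|Q_j(0)P_j(x)|$ for $x<0$; the convergence \eqref{a} sandwiches $Q_j(0)P_j(x)$ between $(\alpha\mp\delta)|P_j(0)P_j(x)|$ for $j\geq N(\delta)$; and the finitely many remaining terms disappear as $x\downarrow-\infty$ because the single term $|P_{N(\delta)}(0)P_{N(\delta)}(x)|$, of highest degree among them, dominates the whole finite block. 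Supplying those sign and degree arguments (or a complete form-domain argument for $\phi$) is what your proposal is missing.
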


\begin{proof}
To prove  the  statement we    use Proposition \ref{prkf}$(iii)$.  Namely,  it is sufficient to show   that
\begin{equation}\label{alpha} M(-\infty)=\lim\limits_{x\downarrow -\infty}M(x)=\lim\limits_{x\downarrow
-\infty}\frac{x\sum\limits_{j=1}^{\infty}Q_j(0)P_j(x)-1}{x\sum\limits_{j=0}^{\infty}P_j(0)P_j(x)}=\alpha.
\end{equation}

  Since  the orthogonal polynomials do not change the sign  in $(-\infty, 0)$,
\begin{equation}\label{polyn}
 P_j(0)P_j(x)=|P_j(0)P_j(x)| \qquad\text{and}\qquad Q_j(0)P_j(x)=-|Q_j(0)P_j(x)|.
\end{equation}
Thus, the sequence
$\sum\limits_{j=0}^{n}P_j(0)P_j(x)=\sum\limits_{j=0}^{n}|P_j(0)P_j(x)|$  increases as
$n\rightarrow\infty$ and, therefore,
$$\lim\limits_{x\downarrow -\infty}M(x)=\lim\limits_{x\downarrow
-\infty}\frac{x\sum\limits_{j=1}^{\infty}Q_j(0)P_j(x)-1}{x\sum\limits_{j=0}^{\infty}P_j(0)P_j(x)}=-\lim\limits_{x\downarrow
-\infty}\frac{\sum\limits_{j=1}^{\infty}|Q_j(0)P_j(x)|}{\sum\limits_{j=0}^{\infty}|P_j(0)P_j(x)|}.$$

It follows from \eqref{a} that for  any small   $\delta>0$ there exists $N=N(\delta)$
such that estimate
$$\alpha-\delta<\frac{Q_j(0)}{P_j(0)}<\alpha+\delta$$ holds for  $j>N(\delta)$.
Combining this inequality with \eqref{polyn}, we get
$$\alpha-\delta< \frac{Q_j(0)P_j(x)}{|P_j(0)P_j(x)|}< \alpha+\delta, \qquad x\in(-\infty,0), \quad j\geq N(\delta).$$
 The latter is equivalent to
 \begin{equation*}\label{epsilon}
 (\alpha-\delta)|P_j(0)P_j(x)|<Q_j(0)P_j(x)<(\alpha+\delta)|P_j(0)P_j(x)|, \quad x\in(-\infty,0), \; j\geq
 N(\delta).
 \end{equation*} Hence
\begin{equation}\label{bi}
\begin{split}
(\alpha-\delta)\frac{\sum\limits_{j=N(\delta)}^{\infty}|P_j(0)P_j(x)|}{\sum\limits_{j=0}^{\infty}|P_j(0)P_j(x)|}
&<\frac{\sum\limits_{j=1}^{\infty}Q_j(0)P_j(x)}{\sum\limits_{j=0}^{\infty}|P_j(0)P_j(x)|}-\frac{\sum\limits_{j=1}^{N(\delta)-1}Q_j(0)P_j(x)}{\sum\limits_{j=0}^{\infty}|P_j(0)P_j(x)|}\\
&<(\alpha+\delta)\frac{\sum\limits_{j=N(\delta)}^{\infty}|P_j(0)P_j(x)|}{\sum\limits_{j=0}^{\infty}|P_j(0)P_j(x)|},
\qquad x\in(-\infty,0),\ \ j\geq
 N(\delta).
\end{split}
\end{equation}
 Since
$$\sum_{j=0}^{\infty}P_j(x)P_j(0)=\sum_{j=0}^{\infty}|P_j(x)P_j(0)|>|P_{N(\delta)}(x)P_{N(\delta)}(0)|,
$$
and $P_j$ is polynomial of degree $j$, we get
\begin{equation}\label{sum*}
0\leq\lim\limits_{x\rightarrow
-\infty}\frac{\sum\limits_{j=1}^{N(\delta)-1}|P_j(0)P_j(x)|}{\sum\limits_{j=0}^{\infty}|P_j(0)P_j(x)|}<\lim\limits_{x\rightarrow
-\infty}\frac{\sum\limits_{j=1}^{N(\delta)-1}|P_j(0)P_j(x)|}{|P_{N(\delta)}(x)P_{N(\delta)}(0)|}=0.
\end{equation}
Similarly we obtain
\begin{equation}\label{sum**}
\lim\limits_{x\rightarrow
-\infty}\frac{\sum\limits_{j=1}^{N(\delta)-1}Q_j(0)P_j(x)}{\sum\limits_{j=0}^{\infty}|P_j(0)P_j(x)|}=0.
\end{equation}
Taking into account that
\begin{equation*}\label{sum}
\sum\limits_{j=0}^{\infty}|P_j(0)P_j(x)|=\sum\limits_{j=N(\delta)}^{\infty}|P_j(0)P_j(x)|+\sum\limits_{j=0}^{N(\delta)-1}|P_j(0)P_j(x)|,
\end{equation*}
we get from \eqref{bi}--\eqref{sum**} the following inequality
\begin{equation*}\label{inequality}
\alpha-\delta\leq \lim\limits_{x\rightarrow
-\infty}\frac{\sum_{j=1}^{\infty}Q_j(0)P_j(x)}{\sum_{j=0}^{\infty}|P_j(0)P_j(x)|}\leq
\alpha+\delta
\end{equation*}
 for  any   arbitrary small $\delta$.
Thus,  equality \eqref{alpha} takes place.

\end{proof}

\begin{remark}
We should mention that the description  of the  Krein and  Friedrichs extensions
  given in Theorems~\ref{kre} and  ~\ref{T_F} in the scalar case coincides with one obtained earlier by Brown and Christiansen in ~\cite{BrChr05}.
\end{remark}
\begin{remark}
\end{remark}
\begin{itemize}
\item[\textit{(i)}]The  condition   $B_j>0$   can  be dropped. Indeed,  it is easy to show that
scalar   Jacobi  matrix with  arbitrary real $B_j$ is  unitarily equivalent   to  the
Jacobi  matrix  with positive $B_j$. The  unitary equivalence  is  established by the
diagonal  matrix with $1$ and $-1$ on the  diagonal. Besides, if  $B_j<0$,  then $1$ and
$-1$  have to stand next to each other   in  the  same  rows as $B_j$.

\item[\textit{(ii)}] The  fact that all zeroes of $P_j(\cdot)$  belong to $[0, \infty)$ might  be
also derived   from the holomorphicity  of  the Weyl
function,  corresponding to another boundary triplet  (see \cite[Proposition 10.1(2)]{DerMal95}),  on   $(-\infty, 0)$.

\item[\textit{(iii)}] In \cite{BrChr05}, authors obtained  the   description  \eqref{fr} by a
different method.
  Namely, they  essentially  used  the  fact that the  minimal (or principal) solution  $u=(u_j)$  of the equation
  $(\mathbf{J}u)_j=0,\,\, j\geq 1,$  has the  form $u=(u_j)=(P_j(0)-\alpha Q_j(0))$  and  belongs to  the   domain
  $\dom(T_F)$.


  \item[\textit{(iv)}] In \cite[Chapter 5, \S 3]{Kre73} (see also \cite{BrChr05}), it was  noted that  all  solutions $\widetilde{\mu}$ of  the Stieltjes moment problem
  associated  with $T_{\min}$ lie between the solutions $\mu_K$ and $\mu_F$  coming from the  Friedrichs and Krein
  extensions in the  following sense
\[\int_0^\infty\frac{d\mu_K(t)}{x-t}\leq\int_0^\infty\frac{d\widetilde{\mu}(t)}{x-t}\leq
\int_0^\infty\frac{d\mu_F(t)}{x-t},\quad x<0. \]
\end{itemize}
Proposition  \ref{prkf}$(v)$ leads to the  description of all non-negative
self-adjoint extensions of $T$ in the scalar case.
\begin{corollary}
Assume $p=1$. Let  $\Pi=\{\mathcal{H},\Gamma_0,\Gamma_1\}$  be the  boundary triplet for
$T_{\max}$ given by \eqref{tripl} and let $M(\cdot)$ be the corresponding Weyl function.
The set of all   non-negative self-adjoint extensions $T_h$ of the operator
$T_{\min}$ is parameterized as follows
\begin{equation}\label{param}
\dom(T_h)=\big\{u\in \dom(T_{\max}):(\Gamma_1-h\Gamma_0)u=0\big\},
\end{equation}
 $h\in[-\infty;\alpha]$ where $\alpha$ is defined by \eqref{a}. In particular,
  $$\dom(T_{-\infty})=\big\{u\in \dom(T_{\max}):\Gamma_0u=0\big\}.$$
\end{corollary}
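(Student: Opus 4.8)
The plan is to read off the statement from Proposition \ref{prkf}$(v)$ together with the two theorems already proved. By Theorem \ref{kre}$(i)$ the reference extension $T_0$ of the triplet \eqref{tripl} coincides with the Krein extension, $T_0 = T_K$, so we are in the case $A_0 = A_K$ of Proposition \ref{prkf}$(v)$; and by \eqref{alpha} in the proof of Theorem \ref{T_F} the Weyl function has boundary value $M(-\infty) = \alpha$, which is a real scalar because $\mathcal H = \mathbb C$. Proposition \ref{prkf}$(v)$ therefore says that the non-negative self-adjoint extensions of $T_{\min}$ are precisely the $T_\Theta$ whose (self-adjoint) boundary relation $\Theta$ satisfies $\Theta - \alpha \le 0$. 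It remains only to turn this relation-inequality into the scalar parameter $h$.

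First I would enumerate the self-adjoint linear relations in the one-dimensional space $\mathcal H = \mathbb C$: each is either the graph $\Theta_h = \{(x,hx): x\in\mathbb C\}$ of multiplication by some $h\in\mathbb R$, or the purely multivalued relation $\Theta_\infty = \{(0,y):y\in\mathbb C\}$. For a graph the shift $\Theta_h - \alpha$ is again a graph, $\Theta_{h-\alpha}$, so $\Theta_h - \alpha \le 0$ is equivalent to the scalar inequality $h \le \alpha$; and by the correspondence \eqref{ext}, the condition $\{\Gamma_0 u,\Gamma_1 u\}\in\Theta_h$ is just $\Gamma_1 u = h\Gamma_0 u$, i.e. $(\Gamma_1 - h\Gamma_0)u = 0$. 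This yields exactly the family \eqref{param} for $h\in(-\infty,\alpha]$.

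Finally I would dispose of the pure relation $\Theta_\infty$. Using the operator-part/multivalued-part decomposition recalled before Proposition \ref{propo}, the operator part of $\Theta_\infty$ lives on $\overline{\dom(\Theta_\infty)} = \{0\}$, so its form is defined only at $0$ and the inequality $\Theta_\infty - \alpha \le 0$ holds vacuously; equivalently, the Krein extension is non-negative by the Friedrichs--Krein theorem quoted in the Introduction, so $\Theta_\infty$ is automatically admissible. Here $\{\Gamma_0 u,\Gamma_1 u\}\in\Theta_\infty$ reads $\Gamma_0 u = 0$, whence $T_{\Theta_\infty} = T_0 = T_K$; since this extension is the endpoint approached by letting $h\to-\infty$ in $(\Gamma_1 - h\Gamma_0)u = 0$, it is natural to label it $T_{-\infty}$. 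Combining the two cases, the admissible parameters are exactly $h\in\{-\infty\}\cup(-\infty,\alpha] = [-\infty,\alpha]$, and the displayed formula for $\dom(T_{-\infty})$ follows. The only genuinely delicate point is the interpretation of the ordering $\Theta - \alpha \le 0$ at the endpoint $\Theta_\infty$, which is why I would settle it explicitly via the decomposition (or via Krein's theorem) rather than by a naive limit in $h$.
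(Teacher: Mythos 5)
Your proof is correct, but it implements Proposition \ref{prkf}$(v)$ along a genuinely different route from the paper's. The paper first settles the two endpoints separately: $h=\alpha$ gives $T_F$ by Theorem \ref{T_F} and $h=-\infty$ gives $T_K=T_0$ by Theorem \ref{kre}$(i)$. Then, for $h<\alpha$, it never invokes the Krein-side branch of Proposition \ref{prkf}$(v)$ that you use; instead it passes to a flipped boundary triplet $\widetilde{\Gamma}_0=\Gamma_1-\alpha\Gamma_0$, $\widetilde{\Gamma}_1=-\Gamma_0$, computes the new Weyl function $\widetilde{M}(z)=(\alpha-M(z))^{-1}$, checks that the new reference extension is $T_F$ and that $\widetilde{M}(0)=0$, rewrites $(\Gamma_1-h\Gamma_0)u=0$ as $\bigl(\widetilde{\Gamma}_1-(\alpha-h)^{-1}\widetilde{\Gamma}_0\bigr)u=0$, and applies the Friedrichs-side criterion $\Theta-M(0)\geq 0$, which collapses to the strict scalar inequality $(\alpha-h)^{-1}>0$, i.e.\ $h<\alpha$. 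You instead apply the Krein-side criterion $\Theta-M(-\infty)\leq 0$ directly in the original triplet \eqref{tripl} --- legitimate, since $T_0=T_K$ by Theorem \ref{kre}$(i)$ and $M(-\infty)=\alpha$ by \eqref{alpha} --- and then enumerate the self-adjoint relations in $\mathbb{C}$. Your route is shorter: no new triplet and no new Weyl function computation. Its price is exactly the point you flag yourself, namely interpreting $\Theta-\alpha\leq 0$ at the purely multivalued relation $\Theta_\infty$; your resolution (the operator part of $\Theta_\infty$ acts in $\{0\}$, so the inequality is vacuous, and independently $T_{\Theta_\infty}=T_0=T_K\geq 0$) is sound. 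The paper's detour buys precisely the avoidance of that endpoint subtlety: after the triplet flip, the exceptional parameters $h=\alpha$ and $h=-\infty$ are already covered by the preceding theorems, and what remains is a scalar inequality involving no linear relations at all. Both arguments establish the full equivalence, including completeness of the parametrization (for $h>\alpha$ the respective inequality fails, so no further non-negative extensions exist).
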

\begin{proof}
First note  that for $h=\alpha$  and $h=-\infty$ the statement was  proved above.
Indeed,
$$\dom(T_\alpha)=\{u\in \dom(T_{\max}):(\Gamma_1-\alpha
\Gamma_0)u=0\}=\dom(T_F)$$  and
$$\dom(T_{-\infty})=\dom(T_{\max})\upharpoonright\ker(\Gamma_0)=\dom(T_K).$$ Thus, it remains to prove that for  $h<\alpha$  formula \eqref{param} defines non-negative self-adjoint extension. The result is implied by combining Proposition \ref{prkf}$(v)$ with Theorem \ref{T_F}.

Indeed, consider a new boundary triplet
$\widetilde{\Pi}=\{\widetilde{\mathcal{H}}, \widetilde{\Gamma_0},\widetilde{\Gamma_1}\}$
\begin{equation*}
\widetilde{\mathcal{H}}=\mathbb{C}, \qquad
\widetilde{\Gamma_0}f=\Gamma_1f-\alpha\Gamma_0f, \qquad
\widetilde{\Gamma_1}f=-\Gamma_0f,
\end{equation*}
where $\Gamma_0, \Gamma_1$ are  given by ~\eqref{tripl}. One easily  obtains that  the
corresponding Weyl  function is $\widetilde{M}(z)=(\alpha-M(z))^{-1}$. Taking into
account the above information about "limit values" of the Weyl function $M(\cdot)$, we
get that $\widetilde{M}(-\infty)$ is  "pure" linear relation, i.e.,
$\widetilde{M}(-\infty)=\{0,\cH\}$, and $\widetilde{M}(0)=0$. Hence, by Proposition
\ref{prkf}$(ii)$,
$T_0:=T_{\max}\upharpoonright\ker(\widetilde{\Gamma_0})=T_F$. Equation
\eqref{param}  in terms of the new  boundary triplet takes  the  form
$$\dom(T_h)=\Big\{u\in \dom(T_{\max}):\Big(\widetilde{\Gamma}_1-\frac
1{\alpha-h}\widetilde{\Gamma}_0\Big)u=0\Big\}.$$    Applying  Proposition
\ref{prkf}$(v)$, we get that $T_h\geq0$ if and only if
$\frac1{\alpha-h}>\widetilde{M}(0)=0$ or $h<\alpha.$
\end{proof}

\end{document}